\documentclass[12pt]{article}
\usepackage{amsmath,amssymb,amsthm}
\usepackage{graphicx,psfrag,float,subcaption,caption}
\usepackage{enumerate,pifont,mathrsfs}
\usepackage[normalem]{ulem}
\usepackage[scale=0.8,a4paper]{geometry}
\usepackage[colorlinks,linkcolor=blue,citecolor=blue]{hyperref}
\usepackage{tikz}

\newtheorem{theorem}{Theorem}[section]

\newtheorem{lemma}[theorem]{Lemma}

\newtheorem{conjecture}[theorem]{Conjecture}

\title{Covering Bridgeless Claw-Free Cubic Graphs with Perfect Matchings}
\author{
    {\small\bf Huaibin Zhou, \quad Xiumei Wang\thanks{Corresponding author. E-mail: wangxiumei@zzu.edu.cn}}\\
    {\small   School of Mathematics and Statistics, Zhengzhou University,}
    {\small Zhengzhou, Henan 450001, China}
}
\date{}

\begin{document}
\maketitle

\begin{abstract}
The Berge conjecture [Proc. London Math. Soc., 1979] asserts that every bridgeless cubic graph can be covered by five perfect matchings.
The 5-cycle double cover conjecture, proposed independently by Preissmann (1981) and Celmins (1984), asserts that every bridgeless graph admits five even subgraphs such that every edge belongs to exactly two of them.
Hakobyan and Mkrtchyan [Ars Math. Contemp., 2019] proved that the 5-cycle double cover conjecture holds if and only if every bridgeless claw-free cubic graph can be covered by four perfect matchings.
In this paper, we prove that the Berge conjecture holds for every bridgeless claw-free cubic graph $G$ and that $G$ has four perfect matchings covering at least $\frac{299}{315}|E(G)|$ edges.

\medskip
\noindent\textbf{Keywords:} perfect matching; cubic graph; claw-free graph; Berge conjecture; 5-cycle double cover conjecture.

\medskip
\noindent\textbf{2020 Mathematics Subject Classification:} 05C70, 05C75
\end{abstract}

\section{Introduction}

In this paper, all graphs are finite, connected, and loopless, but parallel edges are allowed. A graph is \emph{simple} if it has no parallel edges. We use the standard terminology and notation of~\cite{BondyMurty2008}. For a graph $G$, let $V(G)$ and $E(G)$ denote its vertex set and edge set, respectively.
A \emph{matching} of $G$ is a set of pairwise nonadjacent edges of $G$. A vertex $v$ or an edge $e$ is said to be covered by a matching $M$ if $v$ is incident with an edge of $M$ or $e\in M$, respectively. A matching of $G$ is \emph{perfect}
if it covers all vertices of $G$.
A graph $G$ is said to be covered by a family of perfect matchings of $G$ if every edge of $G$ is covered by at least one of these perfect matchings. 
Berge and Fulkerson proposed the following two conjectures, which were proved to be equivalent proved by  Mazzuoccolo~\cite{Mazzuoccolo2011}.

\begin{conjecture}[Berge-Fulkerson~\cite{Fulkerson1971}]\label{conj:bf}
	Every bridgeless cubic graph has six perfect matchings such that every edge belongs to exactly two of them.
\end{conjecture}

\begin{conjecture}[Berge~\cite{Seymour1979}]\label{conj:berge}
	Every bridgeless cubic graph can be covered by five perfect matchings.
\end{conjecture}

A \emph{claw} is a graph isomorphic to $K_{1,3}$, and a graph is \emph{claw-free} if it contains no induced claw. Our first main result is the following.

\begin{theorem}\label{thm:five-cover}
	Every bridgeless claw-free cubic graph can be covered by five perfect matchings.
\end{theorem}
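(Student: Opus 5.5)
We argue by contradiction on a minimal counterexample $G$, with respect to $|V(G)|$, among bridgeless claw-free cubic graphs (parallel edges allowed). We first understand the structure. In a claw-free cubic graph every vertex lies in a triangle, or is incident with a parallel edge. If $G$ has a digon or a triangle, we contract it. For a triangle $T$, contracting $T$ to a single vertex gives a cubic graph $G/T$ that is still bridgeless. We then check that $G/T$ is claw-free, or at least lies in a slightly larger class that we carry along in the induction, such as claw-free graphs with multi-edges.

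The key lifting fact is the following. A perfect matching of $G/T$ uses exactly one edge of $\partial T$, and it extends uniquely to a perfect matching of $G$ by adding the opposite triangle edge. So a cover of $G/T$ by five perfect matchings lifts to five perfect matchings of $G$ covering every edge outside $T$ and all of $\partial T$. Each triangle edge is covered exactly when its opposite cut edge is covered. Since every cut edge is covered, every triangle edge is covered too. Hence triangle contraction is safe, provided $G/T$ stays in the class.

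The same works for a digon $uv$ with external neighbours $a,b$. We replace the path $a\,u\,v\,b$ by a single edge $ab$; a cover lifts because $ab\in M$ corresponds to using $au,vb$, and otherwise one of the two parallel edges is used. To ensure both parallel edges are covered, we note that among the five matchings not containing $ab$ we may choose which parallel edge to take independently. This needs at least two matchings avoiding $ab$. That holds, since otherwise four of the five matchings would contain $ab$ and we could simply replace duplicates.

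The main obstacle is the claw-free structure. Simple claw-free cubic graphs decompose, apart from $K_4$, into a ring of diamonds and triangles: every vertex lies in exactly one induced diamond or in exactly one triangle, and contracting the triangles may create claws, so the class is not closed under contraction. I would therefore use the Oum-type structure theorem directly: $G$ is $K_4$, a ring of diamonds, or is obtained from a bridgeless cubic multigraph $H$ by replacing some vertices with triangles and some edges with strings of diamonds.

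For this $H$ I need a covering by five perfect matchings. A bridgeless cubic multigraph $H$ arising this way need not be claw-free, so the induction breaks. Instead I would exploit that every vertex of $G$ not in a diamond lies in a triangle. Replacing all triangles by vertices turns $G$ into an arbitrary bridgeless cubic multigraph $H$ with some edges subdivided by diamond strings, and here I would use the fact that in $G$ the triangle edges are covered automatically. So it suffices to find five perfect matchings of $H$ that cover all edges of $H$ carrying diamond strings, together with the internal diamond edges, which are handled by the digon-type argument above. Here I would use that $H$ is 2-edge-connected and has a perfect matching $M_0$ together with a 2-factor $H-M_0$. Taking the perfect matching polytope point $\tfrac13\mathbf{1}$ (Edmonds), we get many perfect matchings, and a careful choice gives four covering $M_0$'s complement in pairs, plus $M_0$ itself. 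This step, guaranteeing that every string-carrying edge is covered, is where I expect the real difficulty, and I would first try to prove it via a 3-edge-coloring of the graph obtained by contracting odd circuits of the 2-factor.
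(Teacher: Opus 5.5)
There is a genuine gap, and it sits exactly at the step you flag. It is not a technical difficulty but an open problem. You only use perfect matchings of $G$ that are lifts of perfect matchings of $H$, i.e.\ that contain exactly one edge of $\partial(T)$ for each triangle $T$. By your own lifting fact, such a matching contains the edge of the triangle at $v$ opposite the attachment of $e\in\partial(v)$ if and only if it selects $e$. So triangle edges are not ``covered automatically''. They are all covered if and only if your five perfect matchings of $H$ cover every edge of $H$, not just the string-carrying ones.

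Replacing every vertex of an arbitrary bridgeless cubic graph by a triangle gives a bridgeless claw-free cubic graph. Hence what your final step needs is exactly Conjecture~\ref{conj:berge} for all bridgeless cubic graphs. Neither the point $\frac13\mathbf{1}$ of Edmonds' polytope nor a colouring after contracting odd circuits of a 2-factor (which is not cubic in general) gives this. The polytope method by itself only bounds uncovered weight, as in Lemma~\ref{lem:three-matchings}.

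The missing idea is that a perfect matching of $G$ may contain all three edges of $\partial(T)$ and no edge of $T$. Lemma~\ref{lem:lifting} shows that perfect matchings of $G$ correspond exactly to parity subgraphs of $H$ (all degrees in $\{1,3\}$). This extra freedom makes the covering conditions local:
\begin{itemize}
\item each edge of $H$ must lie in at least one of five parity subgraphs and be missed by at least two, which covers its whole channel;
\item for each vertex $v$ and each edge $e$ at $v$, some parity subgraph must contain $e$ as its only edge at $v$, which covers the triangle at $v$.
\end{itemize}
The paper obtains such parity subgraphs from a Fano colouring $\tau$ using at most six lines (Lemma~\ref{lem:sixline}). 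The sets $\tau^{-1}(L)$ are parity subgraphs, and each edge lies in exactly three of the seven. Take a point $x$ on an unused line $L_0$ and discard the two other lines through $x$; the remaining five parity subgraphs have both properties (Lemma~\ref{lem:five-parity}).

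A smaller point concerns your digon step: ``replace duplicates'' is not a valid reason that at least two matchings avoid $ab$. The correct reason is that the other two edges at $a$ must each be covered, necessarily by distinct matchings not containing $ab$. This is no longer automatic once degree-$3$ parity subgraphs are allowed, which is why the paper imposes condition (ii) of Lemma~\ref{lem:five-parity} explicitly.
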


A subgraph is \emph{even} if every vertex has even degree. A \emph{cycle double cover} of a graph is a family of even subgraphs such that every edge belongs to exactly two of them. A \emph{$k$-cycle double cover} is a cycle double cover consisting of $k$ even subgraphs. The cycle double cover conjecture, independently proposed by Szekeres and Seymour~\cite{BondyMurty2008}, asserts that every bridgeless graph has a cycle double cover. A proof of the cycle double cover conjecture was recently given~\cite{Oum2026}. The proof yields an 8-cycle double cover, while the stronger 5-cycle double cover conjecture remains open.

\begin{conjecture}[5-cycle double cover~\cite{Celmins1984,Preissmann1981}]\label{conj:5cdc}
	Every bridgeless graph has five even subgraphs such that every edge belongs to exactly two of them.
\end{conjecture}

Several cases of the 5-cycle double cover conjecture are known. Huck and Kochol~\cite{HuckKochol1995} proved it for bridgeless cubic graphs of oddness at most two, and Huck~\cite{Huck2001} extended this to oddness at most four.
Zhang~\cite{Zhang1996} and, more recently,  Li et al.~\cite{LiHaoLuoZhang2025} and Liu et al.~\cite{LiuHaoLuoZhang2023,LiuHaoLuoZhang2025}  obtained several sufficient conditions for the existence of 5-cycle double covers in terms of nowhere-zero $4$-flows, Catlin reductions, shortest cycle covers, and non-separating cycles.
The following equivalence, due to Mazzuoccolo, appears in the paper of Hakobyan and Mkrtchyan~\cite{HakobyanMkrtchyan2019}.

\begin{theorem}[\cite{HakobyanMkrtchyan2019}]\label{thm:HM}
	Conjecture~\ref{conj:5cdc} holds if and only if every bridgeless claw-free cubic graph can be covered by four perfect matchings.
\end{theorem}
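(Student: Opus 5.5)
The statement is the equivalence of Hakobyan and Mkrtchyan~\cite{HakobyanMkrtchyan2019}, attributed to Mazzuoccolo. I would prove the two directions separately. Both rest on the standard dictionary between even subgraphs and perfect matchings in cubic graphs: in a cubic graph, the complement of a perfect matching is a $2$-factor, hence an even subgraph. Conversely, any even subgraph of a cubic graph is a disjoint union of circuits, and every vertex has degree $0$ or $2$ in it.

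\emph{Conjecture~\ref{conj:5cdc} implies the claw-free statement.} Let $G$ be a bridgeless claw-free cubic graph. I would first use the known structure of such graphs. Apart from $K_4$ and small multigraph exceptions, which are handled directly, every vertex lies in a triangle or a digon. One can then contract triangles and reduce $G$ to a bridgeless cubic graph $H$, in which each vertex of $H$ corresponds to a triangle of $G$ (or a suitable gadget for the digon/diamond case).

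Apply Conjecture~\ref{conj:5cdc} to $H$ to obtain even subgraphs $C_1,\dots,C_5$ double covering $E(H)$. At each vertex $v$ of $H$ the three edges are each covered twice, so the total demand at $v$ is $6$. Since each $C_i$ meets $v$ in $0$ or $2$ edges, exactly three of the $C_i$ pass through $v$, and each uses a distinct pair of the three edges. Equivalently, the complements $M_i=E(H)\setminus C_i$ are five spanning subgraphs in which every edge lies in exactly three of the $M_i$, and each $M_i$ has degree $1$ or $3$ at every vertex.

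I would then lift each $C_i$ to a perfect matching of $G$. The edge set between triangles is inherited from $H$. Inside a triangle $T_v$:
\begin{itemize}
\item if $C_i$ avoids $v$, I choose the one outer edge assigned by the structure;
\item if $C_i$ passes through $v$ on two edges, the perfect matching uses the third outer edge together with the triangle edge opposite it.
\end{itemize}
The key counting step is that the three $C_i$ through $v$ use the three distinct pairs, so the three triangle edges are all covered. Consequently, choosing four of the five lifted matchings appropriately covers $G$. The step I expect to be most delicate is choosing which four: a uniform choice (e.g.\ discarding one class) needs every edge covered among the remaining ones, and the correct encoding may instead be $M_i$ defined from the complement of $C_i$.

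\emph{The claw-free statement implies Conjecture~\ref{conj:5cdc}.} It is known that the 5-CDC conjecture reduces to bridgeless cubic graphs. Given such an $H$, I would replace each vertex by a triangle to get the claw-free cubic graph $G=\Delta(H)$, which is bridgeless. By hypothesis $G$ is covered by four perfect matchings $N_1,\dots,N_4$. Each triangle has three edges and each $N_j$ contains at most one of them, so the triangle edges are covered with a precise pattern. From this one extracts, at each vertex of $H$, the local combinatorics of a $5$-CDC.

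Setting $D_j$ to be the restriction to $E(H)$ of $E(G)\setminus N_j$, each $D_j$ is even in $H$. This is because each triangle has exactly one or three outer edges in $N_j$, so an even number of outer edges lie outside $N_j$. I would then take the fifth subgraph to be $D_5$, defined as the symmetric difference $D_1\triangle D_2\triangle D_3\triangle D_4$ adjusted appropriately, and verify that every edge is covered exactly twice. This follows from a case check on how many of the $N_j$ contain a given outer edge, using that the triangle edges force the pattern. This verification is the main obstacle.
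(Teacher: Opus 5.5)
The paper gives no proof of this statement; it quotes it from Hakobyan and Mkrtchyan, who attribute it to Mazzuoccolo. So your proposal has to stand on its own. Its overall frame is sensible. In both directions, however, the one step that actually carries the equivalence is left open, and the constructions you do commit to fail.

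\textbf{From a 5-CDC to four matchings.} Discarding one of the five lifted matchings cannot work. Suppose $C_i$ passes through $v$ along edges $b,c$. The three members through $v$ use distinct pairs, so their complements have degree one at $v$ on the distinct edges $a$, $b$, $c$. The remaining two complements have degree three at $v$, and their lifts contain no edge of the triangle $T_v$. Consequently the edge of $T_v$ not incident with the vertex joined to the channel of $a$ is covered only by the lift of $E(H)\setminus C_i$. Hence every choice of four fails as soon as all $C_i$ are nonempty, which holds whenever $H$ is not $3$-edge-colourable.

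Your description of the lift when $C_i$ avoids $v$ is also wrong. There the complement has degree three, so the matching uses all three outer edges and no triangle edge.

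The missing idea is to use $C_5$ as a twist rather than discard it. Put $P_j=C_j\triangle(E(H)\setminus C_5)$ for $j=1,\dots,4$. Then:
\begin{itemize}
\item each $P_j$ is a parity subgraph;
\item an edge lies in exactly one $P_j$ if it is in $C_5$, and in exactly two otherwise;
\item checking the two cases ($C_5$ through $v$ or not) shows that at every vertex the $P_j$ of degree one use all three edges.
\end{itemize}
Extending by Lemma~\ref{lem:lifting} and arguing as in the proof of Theorem~\ref{thm:five-cover} then covers every triangle and every channel.

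\textbf{From four matchings to a 5-CDC.} With $D_j=E(H)\setminus N_j$, no fifth even subgraph can help. Each vertex of $\Delta(H)$ meets each $N_j$ in exactly one edge, and all three of its edges are covered. So every edge lies in one or two of the $N_j$. An edge lying in exactly one $N_j$ is therefore already in three of $D_1,\dots,D_4$. Such edges exist whenever $H$ is not $3$-edge-colourable, since otherwise $D_1,\dots,D_4$ would be a $4$-cycle double cover.

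The adjustment must be made to the first four members, not the fifth. Note that $F=D_1\triangle D_2\triangle D_3\triangle D_4$ is exactly the set of edges of $H$ lying in precisely one $N_j$, and it is even. Then $D_1\triangle F,\dots,D_4\triangle F,F$ is a $5$-cycle double cover of $H$. An edge of $F$ lies in $F$ and in exactly one $D_j\triangle F$. Any other edge lies in exactly two of the $D_j\triangle F$ and not in $F$.
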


It remains open whether every bridgeless claw-free cubic graph can be covered by four perfect matchings. In general, four perfect matchings do not always cover all edges of a bridgeless cubic graph, and several infinite families of such graphs are known~\cite{EsperetMazzuoccolo2014,LukotkaOguagbaka2025,MacajovaSkoviera2021,SkovieraVarsa2024}.
In this paper we study how many edges can be covered by four perfect matchings in bridgeless claw-free cubic graphs.

Let $m_4$ denote the largest constant such that every bridgeless cubic graph $G$ has four perfect matchings covering at least $m_4|E(G)|$ edges. The Petersen graph gives the upper bound $m_4\leq\frac{14}{15}$, and Patel~\cite{Patel2006} conjectured that $m_4=\frac{14}{15}$, which follows from Conjecture~\ref{conj:bf}. For simple bridgeless cubic graphs, Mazzuoccolo~\cite{Mazzuoccolo2013} proved that $m_4\geq\frac{55}{63}$.

Our second main result is the following, which implies that if $m_4=\frac{14}{15}$, then every extremal graph for $m_4$ must contain a claw, as $\frac{299}{315}>\frac{14}{15}$.

\begin{theorem}\label{thm:intro-main}
	Every bridgeless claw-free cubic graph $G$ admits four perfect matchings that cover at least $\frac{299}{315}|E(G)|$ edges.
\end{theorem}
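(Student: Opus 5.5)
The plan is to exploit the rigid local structure of claw-free cubic graphs: reduce $G$ to an arbitrary bridgeless cubic multigraph $H$, choose four perfect matchings of $H$ with a probabilistic or averaging argument, and lift them back to $G$. First dispose of the trivial cases ($K_4$, and the small multigraphs where parallel edges make the triangle structure degenerate), which can be checked directly.

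\emph{Structure.} Otherwise, every vertex of a claw-free cubic graph lies in a triangle. Two triangles sharing an edge form a diamond ($K_4$ minus an edge), and maximal chains of diamonds form \emph{strings}. Contract every triangle not contained in a diamond to a single vertex, and replace every maximal string of diamonds by a single edge. This should yield a cubic multigraph $H$, and the bridgelessness of $G$ should pass to $H$. Conversely, $G$ is recovered from $H$ by inflating a set $T\subseteq V(H)$ of vertices into triangles and replacing a set $D\subseteq E(H)$ of edges by strings of $k_e\ge 1$ diamonds.

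\emph{Lifting.} Each perfect matching $M$ of $H$ extends uniquely (or in a controlled way) to a perfect matching of $G$.
\begin{itemize}
\item At an inflated vertex $v$, $M$ uses exactly one edge at $v$, and the extension takes the opposite triangle edge. Hence the three triangle edges at $v$ are covered exactly when all three edges of $H$ at $v$ occur among the chosen matchings.
\item For a diamond string replacing $e$, the behaviour depends only on whether $e\in M$. If $e\in M$, the extension uses the two end edges of each diamond and the middle (shared) edges. If $e\notin M$, it uses the two ``rim'' pairs, with a free choice of which pair in each diamond.
\item Consequently a diamond string is fully covered by four matchings iff $e$ lies in at least one and misses at least two of them, where the two misses are used with opposite rim choices. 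Charging each uncovered edge of $G$ to its vertex or edge of $H$ therefore reduces everything to local events in $H$.
\end{itemize}

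\emph{Choice of matchings and counting.} By Edmonds' perfect matching polytope theorem, the vector $\frac13\mathbf 1$ is a convex combination of perfect matchings of $H$. This gives a probability distribution on perfect matchings in which each edge appears with probability exactly $1/3$. I would take $M_1$ to be a fixed perfect matching. I would draw $M_2,M_3,M_4$ from the distribution; possibly $M_2$ is chosen as the complement structure of $M_1$, or conditioned from the polytope restricted to avoid $M_1$ (the face $x(M_1)$ small), to boost coverage at the vertices. Then I would bound, for each vertex $v$, the probability that some edge at $v$ is missed, and for each edge $e$ the probability that $e$ is always or almost always chosen. Finally, comparing the expected number of uncovered edges of $G$ with $|E(G)|$, counted per vertex of $H$ with the weights coming from triangles ($3$ edges) and diamonds ($5$ edges per diamond plus connectors), should give an optimisation over the local parameters whose worst case yields uncovered fraction $16/315$.

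\emph{Main obstacle.} The main obstacle is the correlation between the events at the three edges of a vertex: independence fails, and the naive bound $(2/3)^4$ per edge is far too weak. I would first try to handle this by choosing $M_1,M_2$ to be two matchings whose union covers as much as possible (for instance a $2$-factor argument, so that the triangle at $v$ is missed only with small probability). I would then show that the two random matchings $M_3,M_4$, drawn independently from the $\frac13$-distribution, cover the remaining edge at each vertex with probability at least $5/9$. Checking that the worst weighting over triangles versus diamond strings gives exactly $\frac{299}{315}$ is the delicate calculation.
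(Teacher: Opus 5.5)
\textbf{Gap: lifting only perfect matchings of $H$ cannot reach the bound.} Every one of your four matchings of $G$ is the lift of a perfect matching of the core $H$, and no scheme of this kind can give $\frac{299}{315}$. Take $G$ obtained from $H$ by replacing every vertex by a triangle and replacing no edge by a string. If an edge $f=uv$ of $H$ is missed by all of $M_1,\dots,M_4$, then three edges of $G$ are uncovered: the linking edge of $f$, and at each of $u$ and $v$ the triangle edge not incident with the vertex joined to $f$. Hence the covered fraction of $E(G)$ equals the covered fraction of $E(H)$.

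Now let $H$ be the Petersen graph. Any two of its six perfect matchings share exactly one edge, and each edge lies in exactly two of them, so any four perfect matchings cover at most $20-6=14$ of the $15$ edges. For the truncated Petersen graph, which is bridgeless, cubic and claw-free, your method therefore covers at most $\frac{42}{45}=\frac{294}{315}<\frac{299}{315}$ of the edges, whatever distribution you sample from.

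Your numbers also fail on their own terms. A guarantee that $M_3,M_4$ catch the remaining edge at a vertex with probability only $\frac59$ leaves an uncovered fraction of up to $\frac{4}{27}$, far above $\frac{16}{315}$. Moreover, two disjoint perfect matchings $M_1,M_2$ need not exist (again Petersen), so the ``2-factor'' step cannot remove the correlation problem.

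\textbf{Missing idea: lift parity subgraphs.} Perfect matchings of $G$ correspond to \emph{parity subgraphs} of $H$ (all degrees in $\{1,3\}$), not only to perfect matchings. A vertex of degree $3$ corresponds to a triangle all of whose vertices are matched outward. The paper takes $M_1,M_2,M_3$ from the DeVos et al.\ lemma, with weight $w(e)=\ell(e)+\frac{r(e)}{2}+1$. That lemma comes from three successive applications of Edmonds' theorem to fractional perfect matchings taking values $\frac13$; then $\frac15,\frac25$; then $\frac17,\frac27,\frac37$. The paper then adds the parity subgraph $J_4=E(H)\setminus(M_1\triangle M_2)$.

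At every vertex not incident with $B=M_1\cap M_2$, the lifts of $M_1$, $M_2$, $J_4$ select three distinct edges. So every triangle and every channel away from $B$ is covered deterministically. All loss is confined to the channels of edges $e\in B$ and their end triangles. This costs $2w(e)$ for each such $e$, plus another $2w(e)$ if also $e\in M_3$. The total loss is at most $\bigl(\frac{2}{15}+\frac{2}{105}\bigr)w(E(H))=\frac{16}{105}\,w(E(H))$. Since $w(E(H))=m+l+\frac r2\le\frac13|E(G)|$, this gives $\frac{16}{315}$.

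\textbf{Minor point.} Your structural reduction omits $2$-cycles. Because parallel edges are allowed, strings may contain $2$-cycles as well as diamonds, and these must be tracked; this is the role of $r(e)$ in the paper.
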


This paper is organized as follows. In Section~2, we present some preliminaries. Sections~3 and~4 are devoted to the proofs of Theorems~\ref{thm:five-cover} and~\ref{thm:intro-main}, respectively.

\section{Preliminaries}\label{sec:construction}

A \emph{diamond} is an induced subgraph isomorphic to $K_4-e$, see $Q_1$ in Figure~\ref{fig:diamonds}(a).
In a diamond, the edge  with ends of degree three is called the \emph{central edge}, and the other four edges are called the \emph{side edges}.
A \emph{2-cycle} is a circuit of length two, see $Q_2$ in Figure~\ref{fig:diamonds}(a).
A \emph{D2C-string} is a sequence $Q_1,\ldots,Q_k$ of diamonds or 2-cycles in which consecutive members are joined by an edge, see Figure~\ref{fig:diamonds}(a).
A \emph{D2C-ring} is obtained from a D2C-string by adding an edge joining two vertices of degree two, see Figure~\ref{fig:diamonds}(b). 

\begin{figure}[ht]
	\centering
	\begin{tikzpicture}[
		scale=1.0,
		line width=0.8pt,
		point/.style={
			circle,
			draw,
			fill=white,
			inner sep=1.6pt
		},
		central/.style={
			line width=1.4pt
		},
		side/.style={
			densely dotted
		}
		]

		\begin{scope}[shift={(-3.4,0)}]

			\coordinate (aLone) at (-2.65,0);
			\coordinate (aRone) at (-1.55,0);
			\coordinate (aTone) at (-2.10,0.65);
			\coordinate (aBone) at (-2.10,-0.65);

			\coordinate (aLtwo) at (-1.20,0);
			\coordinate (aRtwo) at (-0.10,0);

			\coordinate (aLk) at (1.45,0);
			\coordinate (aRk) at (2.55,0);
			\coordinate (aTk) at (2.00,0.65);
			\coordinate (aBk) at (2.00,-0.65);

			\draw[side] (aLone)--(aTone)--(aRone);
			\draw[side] (aLone)--(aBone)--(aRone);
			\draw[central] (aTone)--(aBone);

			\draw (aLtwo) to[bend left=32] (aRtwo);
			\draw (aLtwo) to[bend right=32] (aRtwo);

			\draw (aLk)--(aTk)--(aRk);
			\draw (aLk)--(aBk)--(aRk);
			\draw (aTk)--(aBk);

			\draw (aRone)--(aLtwo);

			\coordinate (aMleft)  at (0.30,0);
			\coordinate (aMright) at (1.05,0);

			\draw (aRtwo)--(aMleft);
			\draw (aMright)--(aLk);

			\node[
				fill=white,
				inner sep=1.2pt
			] at (0.675,0) {$\cdots$};

			\foreach \p in {
				aLone,aRone,aTone,aBone,
				aLtwo,aRtwo,
				aLk,aRk,aTk,aBk}
				\node[point] at (\p) {};

			\node at (-2.10,-1.0) {$Q_1$};
			\node at (-0.65,-1.0) {$Q_2$};
			\node at (2.00,-1.0) {$Q_k$};

			\node at (0,-1.55) {(a)};

		\end{scope}

		\begin{scope}[shift={(3.4,0)}]

			\coordinate (bLone) at (-2.65,0);
			\coordinate (bRone) at (-1.55,0);
			\coordinate (bTone) at (-2.10,0.65);
			\coordinate (bBone) at (-2.10,-0.65);

			\coordinate (bLtwo) at (-1.20,0);
			\coordinate (bRtwo) at (-0.10,0);

			\coordinate (bLk) at (1.45,0);
			\coordinate (bRk) at (2.55,0);
			\coordinate (bTk) at (2.00,0.65);
			\coordinate (bBk) at (2.00,-0.65);

			\draw (bLone)--(bTone)--(bRone);
			\draw (bLone)--(bBone)--(bRone);
			\draw (bTone)--(bBone);

			\draw (bLtwo) to[bend left=32] (bRtwo);
			\draw (bLtwo) to[bend right=32] (bRtwo);

			\draw (bLk)--(bTk)--(bRk);
			\draw (bLk)--(bBk)--(bRk);
			\draw (bTk)--(bBk);

			\draw (bRone)--(bLtwo);

			\coordinate (bMleft)  at (0.30,0);
			\coordinate (bMright) at (1.05,0);

			\draw (bRtwo)--(bMleft);
			\draw (bMright)--(bLk);

			\node[
				fill=white,
				inner sep=1.2pt
			] at (0.675,0) {$\cdots$};

			\draw (bRk)
				to[out=70,in=110,looseness=0.9]
				(bLone);

			\foreach \p in {
				bLone,bRone,bTone,bBone,
				bLtwo,bRtwo,
				bLk,bRk,bTk,bBk}
				\node[point] at (\p) {};

			\node at (-2.10,-1.0) {$Q_1$};
			\node at (-0.65,-1.0) {$Q_2$};
			\node at (2.00,-1.0) {$Q_k$};

			\node at (0,-1.55) {(b)};

		\end{scope}

	\end{tikzpicture}

	\caption{(a) a D2C-string; (b) a D2C-ring.}
	\label{fig:diamonds}
\end{figure}

Let $H$ be a cubic graph and $u\in V(H)$. Let $v_1,v_2,v_3$ be the neighbours of $u$, which need not be distinct. Replacing $u$ with a triangle is to delete $u$, add a triangle $u_1u_2u_3$, and add edges  joining  $u_i$ and $v_i$ for $i\in\{1,2,3\}$.
 Replacing an edge $e$ of $H$ with a D2C-string $S_e$ is to delete $e$ and add two edges $e_1$ and $e_2$ joining the two ends of $e$ to the two vertices of degree two of $S_e$, respectively. The subgraph obtained from $S_e$ by adding $e_1$ and $e_2$  is called the \emph{channel} of $e$, and  $e_1$ and $e_2$  are called the \emph{pendant edges} of the channel.

\begin{lemma}[\cite{Mkrtchyan2025}]\label{thm:Oum}
A bridgeless claw-free cubic graph $G$ is one of the following:
\vspace{-6pt}
\begin{enumerate}[(i)]
	\setlength{\topsep}{0pt}
	\setlength{\partopsep}{0pt}
	\setlength{\itemsep}{-1pt}
	\setlength{\parsep}{0pt}
	\item $K_4$ or $K_2^3$, where $K_2^3$ consists of two vertices joined by three parallel edges;
	\item a D2C-ring containing at least two members;
	\item a graph obtained from a bridgeless cubic graph $H$ by replacing every vertex of $H$ by a triangle and replacing some edges of $H$ by D2C-strings.
	\end{enumerate}
\end{lemma}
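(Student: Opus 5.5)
We prove the structural statement by induction on $|V(G)|$, using local reductions that keep the graph cubic, bridgeless and claw-free. The key observation is that in a claw-free cubic graph every vertex $v$ has two adjacent neighbours (counting parallel edges as adjacency). So every vertex lies in a triangle or in a 2-cycle.

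If $G$ has a parallel pair $uv$ that is not a triple edge, then $u$ and $v$ each have one further neighbour, $u'$ and $v'$. Since $G$ is bridgeless, $u'\neq v'$ or $G$ is small, and $\{u,v\}$ is a 2-cycle member. If $G$ has no multiple edges, consider the triangles. Two triangles sharing an edge form a diamond, since $K_4$ is the only alternative for a cubic graph. A vertex cannot lie in two edge-disjoint triangles, because its degree is three. So every vertex lies in exactly one triangle or in exactly one diamond.

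The reduction is as follows. If some 2-cycle or diamond $Q$ exists, with its two outgoing edges $e_1=xu'$ and $e_2=yv'$, delete $Q$ and join $u'$ to $v'$ by a new edge $e$. The resulting graph $G'$ is cubic. It is bridgeless because the two pendant edges form a 2-edge-cut and contracting the side preserves 2-edge-connectivity. It is claw-free, since the neighbourhoods of $u'$ and $v'$ only change by substituting one neighbour, and adjacency between the other neighbours is unchanged. The degenerate case is when $u'=v'$, or when $e$ creates a loop because $Q$ sat on a ring. That case happens exactly when $G$ is a D2C-ring, or when $G'$ is $K_2^3$ formed from a shorter ring, and we treat it directly.

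Apply induction to $G'$. If $G'$ is $K_4$ or $K_2^3$, then $G$ is a D2C-ring or a graph of type (iii). For instance, $K_4$ equals $H=K_4$ with vertices replaced by triangles only when no triangle is shared; otherwise $K_4$ with an edge replaced is a D2C structure. This needs small case checks. If $G'$ is a D2C-ring, then $G$ is a longer D2C-ring. If $G'$ is of type (iii), the edge $e$ lies either in a channel, so we lengthen the string, or it is an original edge of $H$, or it is a triangle edge. The last case must be excluded: if $e$ were a triangle edge $u'v'$ in $G'$, then in $G$ the vertices $u',v'$ together with their common neighbour $w$ would give $u'$ neighbours $x,w,$ and a third vertex. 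Claw-freeness at $u'$ forces $x$ adjacent to one of them, which contradicts the structure of $Q$. So $u'$ would have no adjacent neighbour pair. I expect this verification, namely that $e$ never lands inside a triangle or on a pendant edge in an illegal way, to be the main obstacle. It requires care with the claw-free condition at $u'$ and $v'$.

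Finally, the base case is when $G$ has no diamonds and no 2-cycles. Then every vertex lies in a unique triangle and the triangles partition $V(G)$. Contracting each triangle yields a cubic graph $H$, possibly with parallel edges, which is bridgeless because edge cuts are preserved. So $G$ is of type (iii) with no strings, except when $G=K_4$, where triangles overlap, and that is type (i).
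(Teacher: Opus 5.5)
The paper gives no proof of this lemma; it is quoted from the literature, so your argument has to stand on its own. Your reduction is sound: deleting a member $Q$ and joining $u'$ to $v'$ keeps the graph connected, loopless, cubic, bridgeless and claw-free. Your base case is also fine.

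The gap is the step you flagged and left open, namely locating $e=u'v'$ in $G'$, and your list of possibilities there is incomplete. Besides linking edges and triangle edges, $e$ could a priori be the central edge of a diamond, a side edge of a diamond, or one of the two parallel edges of a 2-cycle. Such an edge may sit in a channel of a type (iii) graph or in a D2C-ring $G'$. Pendant edges are linking edges and are harmless. In the internal cases, inserting $Q$ does not lengthen any string, and the result is of none of the three types, so these cases must be excluded. For the same reason, ``if $G'$ is a D2C-ring, then $G$ is a longer D2C-ring'' is unjustified. Your triangle-edge exclusion is also incomplete. Let $p$ be the third neighbour of $u'$. Claw-freeness at $u'$ only requires that \emph{some} pair among $x,w,p$ be adjacent. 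After noting that $x$ is adjacent to neither $w$ nor $p$, you still must show $w\not\sim p$, and this uses the structure of $G'$.

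A uniform way to close this is as follows. All neighbours of $x$ lie in $Q\cup\{u'\}$. Hence claw-freeness of $G$ at $u'$ is equivalent to this: the two edges of $G'-e$ at $u'$ end at the same vertex or at two adjacent vertices. The same holds at $v'$. In a type (iii) graph, or a ring with at least two members, an edge passes this test at both ends if and only if it is a linking edge. The failing cases are:
\begin{itemize}
\item A triangle edge of $u'v'w$: $u'$ sees $w$ and its outside neighbour $p$. These are distinct and nonadjacent, because $p$ has only one edge leaving its own triangle or member.
\item The central edge $pq$ of a diamond: $p$ sees the two nonadjacent vertices of degree two.
\item A side edge $ps$, with $s$ of degree two in the diamond: $s$ sees $q$ and its neighbour $s'$ outside the diamond, while all neighbours of $q$ lie in the diamond.
\item One of the parallel edges of a 2-cycle $pq$: $p$ sees $q$ and its outside neighbour $p'$. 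Here $q\not\sim p'$ because $p'\neq q'$, since otherwise the third edge at $p'$ would be a bridge.
\end{itemize}
Conversely, linking edges pass the test, so every case of the induction then closes.

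Two smaller corrections. First, $u'=v'$ never occurs, since the third edge at $u'$ would be a bridge, so no loops arise. Second, if $G'\in\{K_4,K_2^3\}$, then $G$ is a two-member ring. In $K_4$, the four vertices with the edge $u'v'$ removed induce a diamond; in $K_2^3$, the remaining double edge is a 2-cycle.
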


The graphs $K_4$, $K_2^3$, and every D2C-ring are $3$-edge-colourable and hence can be covered by three perfect matchings. Thus, We need only consider graphs in \textup{(iii)}, referred to as graphs of type \textup{(iii)}.
Let $G$ be a graph of type (iii). We call $H$ the \emph{core} of $G$.
For each $e\in E(H)$, let $\ell(e)$ and $r(e)$ denote the numbers of diamonds and 2-cycles in its channel,  respectively. The edges of the channel of $e$ not contained in any diamond or 2-cycle are called the \emph{linking edges}.
If $\ell(e)=r(e)=0$, the channel of $e$ is an edge, which is a linking edge. Let 
$$m=|E(H)|, l=\sum_{e\in E(H)}\ell(e), r=\sum_{e\in E(H)}r(e).$$ Since $H$ is cubic, $|V(H)|=\frac{2}{3}m$. Since replacing each vertex of $H$ by a triangle gives $2m$ vertices, and the $l$ diamonds and $r$ 2-cycles add $4l+2r$ vertices, we have $|V(G)|=2m+4l+2r$. Since $G$ is cubic,
\begin{equation}\label{eq:size-G}
	|E(G)|=\frac{3}{2}|V(G)|=3m+6l+3r.
\end{equation}

Let $M$ be a perfect matching of $G$, and let $e$ be an edge of $H$. By the structure of the channel of $e$, either its two pendant edges belong to $M$ or neither does. We say that $M$ \emph{selects} $e$ if the two pendant edges of the channel of $e$ belong to $M$, or equivalently, if the two ends of $e$ are covered by the edges of $M$ in the channel of $e$.

A spanning subgraph $J$ of $G$ is a \emph{parity subgraph} if $d_J(v)\equiv d_G(v)\pmod 2$ for every $v\in V(G)$.
 Since $G$ is a cubic graph, we have $d_J(v)\in\{1,3\}$ for every $v\in V(G)$.
Recall that $G$ is a graph  of type \textup{(iii)} with core $H$. We say that a parity subgraph $J$ of $H$ can be \emph{extended} to a perfect matching $M$ of $G$ if the edges of $H$ selected by $M$ are precisely the edges of $J$.

\begin{lemma}\label{lem:lifting}
Let $G$ be a graph  of type \textup{(iii)} with core $H$. If $M$ is a perfect matching of $G$, then the  edges of $H$ selected by $M$ induce a parity subgraph of $H$.
Conversely, every parity subgraph of $H$ can be extended to a perfect matching of $G$.
\end{lemma}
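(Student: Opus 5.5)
The plan is to treat the two directions separately, working locally at each triangle (which replaces a vertex of $H$) and along each channel.

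\emph{Forward direction.} Let $M$ be a perfect matching of $G$ and $J$ the set of edges of $H$ selected by $M$. Two preliminary facts are needed.

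First, along a channel the two pendant edges are either both in $M$ or both out of $M$. The paper already states this; I would justify it by parity. Each diamond and each 2-cycle has an even number of vertices, and exactly two edges leave it (one on each side). Since $M$ covers the member's vertices internally except those matched by a leaving edge, the number of leaving edges of the member lying in $M$ is even, so it is $0$ or $2$. Propagating this along the string shows that all linking edges of the channel, including the two pendant edges, are simultaneously in $M$ or simultaneously out of $M$. Thus "$M$ selects $e$" is well defined.

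Second, consider a vertex $u$ of $H$ and its triangle $u_1u_2u_3$ in $G$. The three edges leaving the triangle are exactly the pendant edges (or the single linking edge) at $u$ of the three channels at $u$. Since the triangle has $3$ vertices, an odd number of them are matched outside it. Hence exactly $1$ or $3$ of the leaving edges lie in $M$. Therefore $d_J(u)\in\{1,3\}$, which is odd and equal to $d_H(u)\pmod 2$. Since $J$ is spanning, it is a parity subgraph of $H$.

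\emph{Converse.} Let $J$ be a parity subgraph of $H$. We construct $M$ piece by piece.

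\begin{enumerate}[(i)]
\item \emph{Channels of edges in $J$.} Put both pendant edges in $M$, together with all linking edges of the channel. Inside each diamond, both vertices of degree two are then already covered, so add the central edge. Inside each 2-cycle, both vertices are already covered, so add nothing.
\item \emph{Channels of edges not in $J$.} Put no linking edges in $M$. Inside each diamond, take the two side edges that form a perfect matching of $K_4-e$ avoiding the central edge. Inside each 2-cycle, take one of its two parallel edges.
\item \emph{Triangles.} At a triangle with $d_J(u)=3$, all three triangle vertices are covered by pendant edges, so add no triangle edges. At a triangle with $d_J(u)=1$, exactly one triangle vertex is covered by a pendant edge, so add the triangle edge joining the other two vertices.
\end{enumerate}

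One then checks that every vertex of $G$ is covered exactly once and that the edges selected by $M$ are precisely those of $J$.

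The only mild subtlety is the case where a channel is a single edge (an edge with $\ell(e)=r(e)=0$). There the "two pendant edges" coincide, so the definition of selection must be read accordingly. Parallel edges in $H$ cause no trouble, because the triangle replacement attaches the edges to distinct triangle vertices.

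I expect no real obstacle; the parity count at each triangle is the key step.
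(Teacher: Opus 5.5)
Your proposal is correct and follows essentially the same route as the paper. For the forward direction you use the same parity count at each triangle: one or all three of its vertices are matched through pendant edges, so $d_J(v)\in\{1,3\}$. For the converse you use the identical construction. In channels of edges in $J$ you take the linking edges and the central edges. In the other channels you take a pair of opposite side edges in each diamond and one edge in each 2-cycle. At a triangle with $d_J(v)=1$ you take the edge opposite the covered vertex.

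Your parity argument that the two pendant edges of a channel are simultaneously in or out of $M$ is a useful addition. The paper only asserts this ``by the structure of the channel.''
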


\begin{proof}
Let $M$ be a perfect matching of $G$, and let $J$ be the spanning subgraph of $H$ induced by the edges of $H$ selected by $M$. For each $v\in V(H)$ and the triangle replacing $v$, either one or all three vertices are covered by edges of $M$ in the  channels containing them. Hence $d_J(v)\in\{1,3\}$. Since $H$ is a cubic graph, $J$ is a parity subgraph of $H$.

Conversely, let $J$ be a parity subgraph of $H$. We now extend $J$ to a perfect matching of $G$. For each edge $e$ of $H$, choose edges in its channel as follows.
When $e\in J$, if $\ell(e)=r(e)=0$, choose the only edge of its channel; otherwise, choose all linking edges and the central edge of every diamond. When $e\notin J$, choose a pair of opposite side edges from each diamond together with one edge from each 2-cycle. Let $M_1$ be the set of all edges chosen in this way.
Then $M_1$ is a matching of $G$, and for each channel $B$ and its two vertices of degree one, say $x$ and $y$, $M_1$ covers every  vertex of $B$ other than $x$ and $y$, and  $M_1$ covers  $x$ and $y$ if and only if $e\in J$.
For each $v\in V(H)$, consider the triangle replacing $v$.
If $d_J(v)=1$, exactly one vertex of the triangle is covered by $M_1$, and we choose the edge of the triangle not incident with this vertex.
If $d_J(v)=3$, all three vertices are covered by $M_1$, and we choose no edge of the triangle.
Let $M_2$ be the set of all edges chosen in this way.
Then $M_1\cup M_2$ is a perfect matching of $G$. This completes the proof.
\end{proof}

\section{Proof of Theorem~\ref{thm:five-cover}}\label{sec:five}

Let $\mathcal{F}$ denote the \emph{Fano plane}, which has seven points and seven lines, as shown in Figure~\ref{fig:fano}. We identify its point set $V(\mathcal{F})=\{001,010,011,100,101,110,111\}$ with the seven nonzero vectors of $\mathbb{F}_2^3$. Three distinct points $x,y,z\in V(\mathcal{F})$ lie on a common line if and only if $x+y+z=000$. For $\mathcal{F}$, any two distinct lines meet in exactly one point and every point lies on exactly three lines.
A \emph{Fano colouring} of a cubic graph $G$ is a function $\tau:E(G)\to V(\mathcal{F})$ such that, at each vertex $v$ of $G$, the colours of the three edges incident with $v$ form a line of $\mathcal{F}$.
\begin{figure}[ht]
	\centering
	\begin{tikzpicture}[
		scale=1.0,
		line width=0.8pt,
		point/.style={circle,draw,fill=white,inner sep=1.8pt}
		]
		
		\coordinate (A) at (0,2.1);
		\coordinate (B) at (-0.9,0.55);
		\coordinate (C) at (0.9,0.55);
		\coordinate (D) at (-1.8,-1.05);
		\coordinate (E) at (0,-1.05);
		\coordinate (F) at (1.8,-1.05);
		\coordinate (G) at (0,0);
		
		\draw (A)--(D);
		\draw (A)--(F);
		\draw (D)--(F);
		\draw (A)--(E);
		\draw (B)--(F);
		\draw (D)--(C);
		\draw (0,-0.02) circle[radius=1.06];
		
		\node[point,label=above:$001$] at (A) {};
		\node[point,label=left:$101$] at (B) {};
		\node[point,label=right:$011$] at (C) {};
		\node[point,label=left:$100$] at (D) {};
		\node[point,label=below:$110$] at (E) {};
		\node[point,label=right:$010$] at (F) {};
		\node[point,label=right:$111$] at (G) {};
		
	\end{tikzpicture}
	\caption{The Fano plane $\mathcal{F}$.}
	\label{fig:fano}
\end{figure}

\begin{lemma}[\cite{MacajovaSkoviera2005}]\label{lem:sixline}
	Every bridgeless cubic graph has a Fano colouring that uses at most six lines of the Fano plane.
\end{lemma}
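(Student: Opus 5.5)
The plan is to translate Fano colourings into flows and then avoid a line through a simple criterion. As the paper notes, the points of $\mathcal{F}$ are the nonzero vectors of $\mathbb{F}_2^3$, and three distinct points are collinear exactly when they sum to $000$.

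\textbf{Step 1: Fano colourings are nowhere-zero $\mathbb{Z}_2^3$-flows.} Let $\tau:E(G)\to\mathbb{F}_2^3\setminus\{000\}$ satisfy $\tau(e_1)+\tau(e_2)+\tau(e_3)=000$ at every vertex $v$ with incident edges $e_1,e_2,e_3$. Then the three colours are distinct, since $\tau(e_1)=\tau(e_2)$ would force $\tau(e_3)=000$. Being distinct nonzero vectors summing to zero, they form a line. So Fano colourings of a cubic graph are exactly the nowhere-zero $\mathbb{Z}_2^3$-flows, and equivalently exactly the triples of even subgraphs $C_1,C_2,C_3$ covering $E(G)$, with $C_i$ the support of the $i$-th coordinate.

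\textbf{Step 2: a criterion for missing a line.} Every line of $\mathcal{F}$ has the form $L_\varphi=\{x\neq 000:\varphi(x)=0\}$ for a nonzero linear functional $\varphi$ on $\mathbb{F}_2^3$. At a vertex $v$, the line used is $L_\varphi$ iff all three edges at $v$ satisfy $\varphi(\tau(e))=0$. This happens iff $v$ is untouched by the even subgraph $D_\varphi=\{e:\varphi(\tau(e))=1\}$. Hence $\tau$ avoids $L_\varphi$ iff $D_\varphi$ meets every vertex. In a cubic graph a spanning even subgraph with no isolated vertex is a $2$-factor. After a linear change of coordinates, it therefore suffices to find even subgraphs $C_1,C_2,C_3$ covering $E(G)$ with $C_1$ a $2$-factor.

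\textbf{Step 3: construction.} By Petersen's theorem, pick a perfect matching $M$ and set $C_1=F=E(G)\setminus M$. It remains to cover $M$ by two even subgraphs $C_2,C_3$ of $G$.

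A subset $S\subseteq M$ is the trace on $M$ of an even subgraph of $G$ iff $S$ has even degree at every vertex of $G/F$. Indeed, each cycle of $F$ has an even number of vertices incident with $S$, and these can be paired up along arcs of the cycle. So the task becomes covering $E(G/F)=M$ by two even subgraphs of $G/F$, i.e. finding a nowhere-zero $4$-flow in $G/F$.

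\textbf{Main obstacle.} This requirement is the crux: $G/F$ is bridgeless, since a bridge of $G/F$ would be a bridge of $G$, but it need not admit a $4$-flow for an arbitrary $M$. The plan is not to insist that $C_2,C_3$ cover $M$ with $C_1=F$ fixed.

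Instead, I would use Jaeger's $8$-flow argument for the remaining two coordinates and correct $C_1$ afterwards. Concretely, take $C_2,C_3$ from a nowhere-zero $\mathbb{Z}_2^3$-flow of $G$, which exists by Jaeger's theorem. Let $U$ be the set of edges not in $C_2\cup C_3$, and look for a $2$-factor $F$ containing $U$. The complement $M$ of such an $F$ is a perfect matching inside $C_2\cup C_3$, so the triple $(F,C_2,C_3)$ would cover $E(G)$.

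Existence of such an $F$ follows if the perfect matching polytope contains a point that is $0$ on $U$. I would attack this via Edmonds' description, after first choosing $C_2,C_3$ to maximize $|C_2\cup C_3|$ and checking the odd-cut constraints using bridgelessness. I expect this compatibility between the $4$-flow part and a $2$-factor to be the genuinely hard step, and where the original argument of~\cite{MacajovaSkoviera2005} does its real work.
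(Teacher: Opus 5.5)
The paper gives no proof of this lemma; it quotes it from \cite{MacajovaSkoviera2005}. So your argument has to stand on its own, and it has a genuine gap.

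Your Steps 1 and 2 are correct, and so is the trace argument in Step 3. Together they reduce the lemma to the following: find a perfect matching $M$ such that $G/F$, with $F=E(G)\setminus M$, has a nowhere-zero $4$-flow. That existence statement is the whole content of the lemma, and you never prove it.

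You rightly note that an arbitrary $M$ fails. For example, replace every vertex of the Petersen graph by a triangle and let $M$ be the set of original edges; then $G/F$ is the Petersen graph. Your fallback is only a restatement of the lemma. By your own Steps 1--2, any Fano colouring missing a line gives, after a change of coordinates, a triple $(C_1,C_2,C_3)$ with $C_1$ a $2$-factor containing $U=E(G)\setminus(C_2\cup C_3)$. So asking for a $2$-factor containing $U$, for a suitable $8$-flow, is equivalent to what you want to prove. Choosing $C_2,C_3$ to maximize $|C_2\cup C_3|$ comes with no reason why $G-U$ should be matchable. You also propose no fractional perfect matching vanishing on $U$, and even the vertex constraints are not automatic, since each vertex incident with $U$ must put all its weight on its other two edges.

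The missing idea is to keep $C_1=F$ but choose $M$ with care.
\begin{enumerate}[(1)]
\item The vector $x\equiv\frac13$ is a fractional perfect matching of $G$, and every $3$-edge-cut is a tight odd cut for it. So Lemma~\ref{lem:selection} gives a perfect matching $M$ meeting every $3$-edge-cut in exactly one edge. This is exactly the first step in the proof of Lemma~\ref{lem:three-matchings}.
\item Every edge cut of $K=G/F$ is an edge cut of $G$ contained in $M$. Hence $K$ has no cut of size $1$ or $3$.
\item Let $\{e,f\}$ be a $2$-edge-cut of $K$. The cuts of $K/e$ are precisely the cuts of $K$ not containing $e$, so no cuts of size $1$ or $3$ appear. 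Moreover, a nowhere-zero $\mathbb{Z}_2^2$-flow of $K/e$ extends to $K$ by giving $e$ the value of $f$.
\item Contracting repeatedly ends in a single vertex (possibly with loops) or a $4$-edge-connected graph. A $4$-edge-connected graph has two edge-disjoint spanning trees and hence a nowhere-zero $4$-flow by Jaeger's theorem.
\end{enumerate}
Hence $K$ has a nowhere-zero $4$-flow, and your lifting argument in Step 3 completes the proof.
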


\begin{lemma}\label{lem:five-parity}
Every bridgeless cubic graph $H$ has five parity subgraphs $J_1,\ldots,J_5$ satisfying the following properties:
\vspace{-6pt}
\begin{enumerate}[(i)]
	\setlength{\topsep}{0pt}
	\setlength{\partopsep}{0pt}
	\setlength{\itemsep}{-1pt}
	\setlength{\parsep}{0pt}
\item every edge of $H$ belongs to at least one $J_i$;
\item for every edge $e\in E(H)$, at least two of $J_1,\ldots,J_5$ do not contain $e$;
\item for every vertex $v\in V(H)$ and every edge $e$ incident with $v$, there is a $J_i$ that contains $e$ but no other edge incident with $v$.
\end{enumerate}
\end{lemma}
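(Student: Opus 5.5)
The plan is to build the five parity subgraphs from a Fano colouring. Lemma~\ref{lem:sixline} gives a Fano colouring with at most six lines, and each line of $\mathcal{F}$ cuts out one parity subgraph. I then discard two lines, chosen so that the one unused line of the colouring is what makes property (iii) hold.

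\textbf{Step 1: parity subgraphs from lines.} Let $\tau$ be a Fano colouring of $H$ given by Lemma~\ref{lem:sixline}. For a vertex $v$, write $L_v$ for the line formed by the colours of the three edges at $v$. For each line $L$ of $\mathcal{F}$, let $J_L$ be the spanning subgraph of $H$ with edge set $\{e:\tau(e)\in L\}$. At a vertex $v$, two cases occur.
\begin{itemize}
\item If $L_v=L$, then $d_{J_L}(v)=3$.
\item If $L_v\neq L$, then $L_v$ and $L$ meet in exactly one point. So exactly one edge at $v$ lies in $J_L$, and $d_{J_L}(v)=1$.
\end{itemize}
Hence every $J_L$ is a parity subgraph of $H$. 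Moreover, when $L_v\neq L$, the graph $J_L$ contains exactly one edge at $v$, namely the edge coloured by the point $L\cap L_v$.

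\textbf{Step 2: choice of five lines.} Since $\tau$ uses at most six lines, some line $L_0$ is used at no vertex. Fix a point $p\in L_0$, and let $L_1,L_2$ be the other two lines through $p$. Take $J_1,\dots,J_5$ to be the $J_L$ for the five lines $L\notin\{L_1,L_2\}$. This family includes $J_{L_0}$.

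\textbf{Step 3: checking the properties.} Let $e$ be an edge with colour $x=\tau(e)$. Recall that $x$ lies on exactly three lines.
\begin{itemize}
\item Property (i): only two lines are discarded, so at least one of the three lines through $x$ is chosen. That chosen $J_L$ contains $e$.
\item Property (ii): at most three of the five chosen lines contain $x$. So at least two of the $J_i$ miss $e$.
\item Property (iii): let $v$ be an end of $e$. Then $x\in L_v$, and $L_v\neq L_0$ because $L_0$ is unused. By Step 1, it suffices to find a chosen line $L\neq L_v$ through $x$. The two lines through $x$ other than $L_v$ are both discarded only if they are exactly $L_1$ and $L_2$. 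That forces $x=L_1\cap L_2=p$, and then the third line through $p$, namely $L_v$, equals $L_0$. This is a contradiction. So some chosen $L\neq L_v$ contains $x$, and the corresponding $J_L$ contains $e$ but no other edge at $v$.
\end{itemize}

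The main obstacle is property (iii), and it is what dictates the choice in Step 2. A naive choice, such as discarding $L_0$ together with an arbitrary second line, can fail at a vertex whose line is the third line through the intersection point of the two discarded lines. Keeping the unused line $L_0$ and discarding two lines that meet on $L_0$ pushes this single bad configuration onto $L_0$. Since no vertex has $L_0$ as its line, the bad case never occurs.
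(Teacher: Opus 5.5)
Your proposal is correct and follows essentially the same argument as the paper. The paper also builds the parity subgraphs $J_L$ from a Fano colouring with an unused line $L_0$ and discards the two other lines through a point of $L_0$. It derives (i) and (ii) from each point lying on three lines, and gets (iii) by the same contradiction: the two lines through $\tau(e)$ other than $L_v$ can both be discarded only if $L_v = L_0$.
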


\begin{proof}
By Lemma~\ref{lem:sixline}, there exists a Fano colouring $\tau$ of $H$ that uses at most six lines.
For each line $L$ of $\mathcal{F}$, let $J_L$ be the spanning subgraph of $H$ with
$E(J_L)=\{e\in E(H):\tau(e)\in L\}$.
We first show that $J_L$ is a parity subgraph of $H$.
For each $v\in V(H)$, let $L_v$ be the line formed by the colours of the three edges incident with $v$.
If $L\neq L_v$, then $L$ and $L_v$ meet in exactly one point, and hence $d_{J_L}(v)=1$; if $L=L_v$, then $d_{J_L}(v)=3$.
Thus, $J_L$ is a parity subgraph of $H$ for every line $L$ of $\mathcal{F}$.

Since $\tau$ uses at most six lines, there is a line $L_0$ not used by $\tau$.
Let $x\in L_0$, and let $L_1$ and $L_2$ be the other two lines containing $x$.
The five lines other than $L_1$ and $L_2$ determine five parity subgraphs of $H$, not necessarily distinct, which we denote by $J_1,\ldots,J_5$.
Since each point of $\mathcal{F}$ lies on exactly three lines, every edge of $H$ is contained in exactly three of the seven parity subgraphs $J_L$ and is not contained in the other four.
After omitting $J_{L_1}$ and $J_{L_2}$, at least one of the remaining five parity subgraphs contains the edge, while at least two of them do not.
Hence, every edge of $H$ is contained in at least one of $J_1,\ldots,J_5$ and is not contained in at least two of them.
This proves \textup{(i)} and \textup{(ii)}.

For \textup{(iii)}, let $v\in V(H)$, $e$ be an edge incident with $v$, and $y=\tau(e)$. Let $L_1'$ and $L_2'$ be the two lines other than $L_v$ containing $y$. Since $L_i'\cap L_v=\{y\}$ for $i\in\{1,2\}$, $J_{L_i'}$ contains $e$ and no other edge incident with $v$. Suppose that neither $J_{L_1'}$ nor $J_{L_2'}$ is one of $J_1,\ldots,J_5$. Then $\{L_1',L_2'\}=\{L_1,L_2\}$, and hence $\{y\}=L_1'\cap L_2'=L_1\cap L_2=\{x\}$. Thus $L_v=L_0$, contradicting that $L_0$ is not used by $\tau$. So one of $J_{L_1'}$ and $J_{L_2'}$ satisfies \textup{(iii)}. This completes the proof.
\end{proof}

\begin{proof}[Proof of Theorem~\ref{thm:five-cover}]
Let $G$ be a graph of type \textup{(iii)} with core $H$.  Let $J_1,\ldots,J_5$ be parity subgraphs of $H$ satisfying conditions \textup{(i)}-\textup{(iii)} of Lemma~\ref{lem:five-parity}. By Lemma~\ref{lem:lifting}, for each $i\in\{1,\ldots,5\}$, choose a perfect matching $M_i$ of $G$ extending $J_i$.

Let $e$ be an edge of $H$ and $B_e$ the channel of $e$. By Lemma~\ref{lem:five-parity} \textup{(i)} and \textup{(ii)}, there exist distinct indices $i,j,k$ in $\{1,\ldots,5\}$ such that $J_i$ contains $e$, while $J_j$ and $J_k$ do not.
Thus, $M_i$ selects $e$ and contains all linking edges in $B_e$ and the central edge of each diamond in $B_e$.
Moreover, $M_j$ and $M_k$ do not select $e$. In each diamond of $B_e$, $M_j$ contains one pair of opposite side edges and $M_k$ contains the other pair; in each 2-cycle, $M_j$ and $M_k$ contain different edges.
 Hence $\{M_i, M_j, M_k\}$ covers the channel of $e$.

 Let $v\in V(H)$ and let $f$ be an edge incident with $v$. By Lemma~\ref{lem:five-parity}\textup{(iii)}, there is an $s$ in $\{1,\ldots,5\}$ such that  $J_s$ contains $f$ but no other edge incident with $v$. Thus, $d_{J_s}(v)=1$.
 By the construction of $M_s$, in the triangle replacing $v$, the matching $M_s$ contains the edge not incident with  the vertex joined to the channel of $f$.
  For the three edges incident with $v$, the vertices joined to their channels are precisely the three vertices of the triangle replacing $v$. Hence  the triangle is covered by $\{M_1,\ldots,M_5\}$.
Since $v$ was arbitrary and  every channel is also covered, $\{M_1,\ldots,M_5\}$ cover $G$, this completes the proof.
\end{proof}

\section{Proof of Theorem~\ref{thm:intro-main}}
A graph is called \emph{matchable} if it has a perfect matching.  Let $H$ be a matchable graph.
For $X\subseteq V(H)$, let $\partial(X)$ denote the set of edges with exactly one end in $X$, and let $\partial(v)=\partial(\{v\})$ for $v\in V(H)$. A cut $\partial(X)$ is \emph{odd} if $|X|$ is odd. For $x\in\mathbb{R}^{E(H)}$ and $C\subseteq E(H)$, let $x(C)=\sum_{e\in C}x(e)$.
For a perfect matching $M$ of $H$, its \emph{characteristic vector} $\chi^M$ is defined by $\chi^M(e)=1$ if $e\in M$ and $\chi^M(e)=0$ otherwise. The \emph{perfect matching polytope} of $H$, denoted by $\mathcal{P}(H)$, is the convex hull of the characteristic vectors of all perfect matchings of $H$.
Edmonds' perfect matching polytope theorem~\cite{Edmonds1965} states that a vector $x\in\mathcal{P}(H)$  if and only if
\[
\begin{cases}
x(e)\geq 0, & \text{for every } e\in E(H),\\
x(\partial(v))=1, & \text{for every } v\in V(H),\\
x(C)\geq 1, & \text{for every odd cut } C \text{ of } H.
\end{cases}
\]
A vector in $\mathcal{P}(H)$ is called a \emph{fractional perfect matching} of $H$. Thus every fractional perfect matching $x$ of $H$ can be written as a convex combination of the characteristic vectors of perfect matchings of $H$, that is, there exist perfect matchings $M_1,\ldots,M_k$ of $H$ and nonnegative numbers $\lambda_1,\ldots,\lambda_k$ such that $\sum_{i=1}^k\lambda_i=1$ and $x=\sum_{i=1}^k\lambda_i\chi^{M_i}$.

\begin{lemma}\label{lem:selection}
Let $x$ be a fractional perfect matching of a matchable graph $H$, and let $c:E(H)\to\mathbb R$ be an edge-weight function.
Then there exists a perfect matching $M$ of $H$ such that
\[
c(M)\leq\sum_{e\in E(H)}c(e)x(e)
\]
and $|M\cap C|=1$ for every odd cut $C$ with $x(C)=1$.
\end{lemma}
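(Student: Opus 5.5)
Write $x$ as a convex combination, as Edmonds' theorem allows. That is, take perfect matchings $M_1,\ldots,M_k$ of $H$ and positive numbers $\lambda_1,\ldots,\lambda_k$ (discarding zero coefficients) with $\sum_i\lambda_i=1$ and $x=\sum_{i=1}^k\lambda_i\chi^{M_i}$. The two required properties will then be read off from this decomposition: the tight-cut condition holds for every $M_i$ that appears, and the weight condition holds for at least one of them by averaging.

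For the tight cuts, let $C=\partial(X)$ be an odd cut with $x(C)=1$. Each perfect matching meets every odd cut in an odd number of edges, because $|X|$ is odd and the vertices of $X$ matched inside $X$ come in pairs. Hence $|M_i\cap C|\geq 1$ for every $i$. On the other hand,
\[
1=x(C)=\sum_{i=1}^k\lambda_i|M_i\cap C|\geq\sum_{i=1}^k\lambda_i=1,
\]
so equality holds throughout. Since every $\lambda_i>0$, this forces $|M_i\cap C|=1$ for every $i$. Thus every matching in the support of the decomposition meets every tight odd cut of $x$ exactly once.

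For the weight, note that
\[
\sum_{e\in E(H)}c(e)x(e)=\sum_{i=1}^k\lambda_i c(M_i)
\]
is a convex combination of the values $c(M_1),\ldots,c(M_k)$. Its value is therefore at least $\min_i c(M_i)$. Choose $M=M_i$ attaining this minimum. Then $c(M)\leq\sum_{e}c(e)x(e)$, and by the previous paragraph $M$ also satisfies $|M\cap C|=1$ for every odd cut $C$ with $x(C)=1$.

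The only point needing care is to restrict to matchings with strictly positive coefficients; a matching with coefficient $0$ could meet a tight cut in three or more edges. Everything else follows directly from Edmonds' theorem and the parity argument, so I expect no real obstacle.
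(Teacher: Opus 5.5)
Your proposal is correct and follows essentially the same argument as the paper. You decompose $x$ via Edmonds' theorem, restrict to matchings with positive coefficients, and combine the parity bound $|M_i\cap C|\geq 1$ with $x(C)=1$ to force $|M_i\cap C|=1$. You then take a minimum-cost matching in the support, which satisfies the weight bound by averaging.
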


\begin{proof}
By the perfect matching polytope theorem, write $x=\sum_{i=1}^k\lambda_i\chi^{M_i}$ as above. Then
\[
\begin{aligned}
\sum_{e\in E(H)}c(e)x(e)
&=\sum_{e\in E(H)}c(e)\sum_{i=1}^k\lambda_i\chi^{M_i}(e)
&=\sum_{i=1}^k\lambda_i\sum_{e\in E(H)}c(e)\chi^{M_i}(e)
=\sum_{i=1}^k\lambda_i c(M_i).
\end{aligned}
\]
Choose $j$ such that $\lambda_j>0$ and $c(M_j)$ is minimum among all $c(M_i)$ with $\lambda_i>0$. Then
\[
c(M_j)=\sum_{i=1}^k\lambda_i c(M_j)
\leq\sum_{i=1}^k\lambda_i c(M_i)
=\sum_{e\in E(H)}c(e)x(e).
\]
Let $C$ be an odd cut with $x(C)=1$. Then  for each $i$,  $|M_i\cap C|$ is odd, and so $|M_i\cap C|\geq1$. Moreover,
\[
\begin{aligned}
1=x(C)
&=\sum_{e\in C}x(e)
=\sum_{e\in C}\sum_{i=1}^k\lambda_i\chi^{M_i}(e)
&=\sum_{i=1}^k\lambda_i\sum_{e\in C}\chi^{M_i}(e)
=\sum_{i=1}^k\lambda_i|M_i\cap C|.
\end{aligned}
\]
Since  $\sum_{i=1}^k\lambda_i=1$, if $\lambda_i>0$, then $|M_i\cap C|=1$, in particular,  $|M_j\cap C|=1$. Taking $M=M_j$ completes the proof.
\end{proof}

The first bound in the following lemma was established by DeVos et al.~\cite{DeVosEtAl2020}. For completeness, we include its proof here.

\begin{lemma}\label{lem:three-matchings}
Let $H$ be a bridgeless cubic graph and let $w:E(H)\to\mathbb R$ be a nonnegative edge-weight function.
Then there exist perfect matchings $M_1,M_2,M_3$ of $H$ such that
\[
w(M_1\cap M_2)\leq\frac{1}{15}w(E(H)) \text{ and } w(M_1\cap M_2\cap M_3)\leq\frac{1}{105}w(E(H)).
\]
\end{lemma}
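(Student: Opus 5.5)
We choose $M_1,M_2,M_3$ one after another, each time applying Lemma~\ref{lem:selection} to a well-chosen fractional perfect matching. The basic fractional perfect matching is the uniform vector $x_0\equiv\frac13$. Since $H$ is bridgeless and cubic, every odd cut $C$ has $|C|\ge 3$, and $|C|\equiv 1\pmod 2$ by a parity count. Hence $x_0(C)\ge 1$, with equality exactly on the $3$-cuts. So $x_0\in\mathcal P(H)$ by Edmonds' theorem.

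\textbf{Step 1 (choose $M_1$).} Apply Lemma~\ref{lem:selection} to $x_0$ with $c=w$. This gives a perfect matching $M_1$ with $w(M_1)\le\frac13 w(E(H))$ that meets every $3$-cut in exactly one edge. This tightness property is what makes the next step possible.

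\textbf{Step 2 (choose $M_2$).} Define a new fractional perfect matching $x_1$ by $x_1(e)=\frac15$ for $e\in M_1$ and $x_1(e)=\frac25$ for $e\notin M_1$. At each vertex, $x_1(\partial(v))=\frac15+2\cdot\frac25=1$. For the odd cuts:
\begin{itemize}
\item If $|C|=3$, then $|C\cap M_1|=1$ by Step~1, so $x_1(C)=\frac15+\frac45=1$.
\item If $|C|\ge5$, then $x_1(C)\ge 5\cdot\frac15=1$.
\end{itemize}
So $x_1\in\mathcal P(H)$. Apply Lemma~\ref{lem:selection} to $x_1$ with $c=w\cdot\chi^{M_1}$. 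This yields $M_2$ with
\[
w(M_1\cap M_2)\le\tfrac15 w(M_1)\le\tfrac1{15}w(E(H)),
\]
and $M_2$ meets every odd cut $C$ with $x_1(C)=1$ in exactly one edge. These tight cuts include all $3$-cuts and every $5$-cut that is entirely contained in $M_1$.

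\textbf{Step 3 (choose $M_3$).} This is the main obstacle. We need a fractional perfect matching $x_2$ with value $\frac17$ on the edges of $M_1\cap M_2$ that is still feasible. Then Lemma~\ref{lem:selection} with $c=w\cdot\chi^{M_1\cap M_2}$ gives
\[
w(M_1\cap M_2\cap M_3)\le\tfrac17\cdot\tfrac1{15}w(E(H))=\tfrac1{105}w(E(H)).
\]
Since $M_1\cap M_2$ is a matching, the natural attempt at $x_2$ is to assign $\frac17$ to edges in $M_1\cap M_2$ and $\frac37$ to the other two edges at each end of such an edge. At a vertex covered by $M_1\setminus M_2$ and $M_2\setminus M_1$ (and so also by a third edge in neither), we assign weights $a,b,c$ with $a+b+c=1$, say around $\frac13$ each but adjusted so that every edge gets a consistent value from both ends. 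Consistency may fail, because an edge in neither matching can join a vertex of the first kind to one of the second. So the more robust route is to take
\[
x_2=\alpha\chi^{M_1\triangle M_2\text{-based}}+\ldots,
\]
that is, a convex combination of $x_0$, $\chi^{M_1}$, $\chi^{M_2}$ and $x_1$-type vectors. The requirement is that $x_2$ is a combination whose value on $M_1\cap M_2$ equals $\frac17$ and which satisfies every cut constraint. For the cut constraints, we use the tight-cut structure from Steps~1 and~2: each $3$-cut meets $M_1$ and $M_2$ in exactly one edge each, and each $5$-cut with $x_1(C)=1$ meets $M_2$ once. The plan is to verify $x_2(C)\ge1$ case by case on $|C|\in\{3,5\}$ and on the sizes of $C\cap M_1$ and $C\cap M_2$, using $|C\cap M_i|$ odd. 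Larger cuts should be automatic, since they have $|C|\ge7$ and each edge gets at least $\frac17$. This is exactly where the numbers $7$ and $105=15\cdot7$ come from.
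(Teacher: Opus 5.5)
\textbf{There is a genuine gap in Step 3.} Your Steps 1 and 2 coincide with the paper's argument, including the observation that $M_2$ meets every $5$-cut contained in $M_1$ exactly once. But Step 3 carries the whole second inequality, and you never carry it out: no $x_2$ is defined and no cut is checked.

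The fallback route you propose cannot work. Each of $x_0$, $\chi^{M_1}$, $\chi^{M_2}$ and $x_1$ (or the analogue of $x_1$ built from $M_2$) takes value at least $\frac15$ on every edge of $M_1\cap M_2$. So any convex combination of them also does, and can never reach $\frac17$. Moreover, a convex combination of points of $\mathcal P(H)$ automatically lies in $\mathcal P(H)$. Your plan to verify the cuts ``case by case'' therefore shows that the construction was never actually pinned down.

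\textbf{The missing construction.} Your ``natural attempt'' was already correct once the weight of an edge depends only on how it meets $M_1$ and $M_2$. Set $x_2(e)=\frac{1}{7}\bigl(3-\chi^{M_1}(e)-\chi^{M_2}(e)\bigr)$. This gives $\frac17$ on $M_1\cap M_2$, $\frac27$ on $M_1\triangle M_2$, and $\frac37$ on edges in neither matching.

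\emph{Vertex constraints.} The consistency problem you worried about does not exist, because an edge in neither matching receives $\frac37$ from both ends. At a vertex where $M_1$ and $M_2$ share an edge, the sum is $\frac17+\frac37+\frac37=1$. At a vertex where they use distinct edges, the third edge lies in neither, and the sum is $\frac27+\frac27+\frac37=1$.

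\emph{Cut constraints.} For an odd cut $C$ with $p_i=|C\cap M_i|$, you get $x_2(C)=\frac{3|C|-p_1-p_2}{7}$.
\begin{enumerate}[(i)]
\item If $|C|=3$, Steps 1--2 give $p_1=p_2=1$, so $x_2(C)=1$.
\item If $|C|\ge7$, the bound holds because every edge has weight at least $\frac17$.
\item If $|C|=5$, you need $p_1+p_2\le8$. Since $p_1$ and $p_2$ are odd, the only failure would be $p_1=p_2=5$. This is excluded by the tight $5$-cut property you recorded in Step 2: $C\subseteq M_1$ forces $x_1(C)=1$, hence $p_2=1$.
\end{enumerate}

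With this $x_2$, Lemma~\ref{lem:selection} applied with $c=w\chi^{M_1\cap M_2}$ gives $w(M_1\cap M_2\cap M_3)\le\frac17 w(M_1\cap M_2)\le\frac1{105}w(E(H))$.
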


\begin{proof}
Let $x(e)=\frac{1}{3}$ for every $e\in E(H)$. Since $H$ is cubic, $x(\partial(v))=1$ for every $v\in V(H)$, and $|\partial(X)|$ has the same parity as $|X|$ for every $X\subseteq V(H)$. Since $H$ is bridgeless, every odd cut has at least three edges. Hence $x(C)\geq1$ for every odd cut $C$, and therefore $x$ is a fractional perfect matching of $H$.
Note that every $3$-edge-cut $C$ of $H$ is an odd cut and satisfies $x(C)=1$.
By Lemma~\ref{lem:selection}, there exists a perfect matching $M_1$ of $H$ such that $w(M_1)\leq\sum_{e\in E(H)}w(e)x(e)=\frac{1}{3}w(E(H))$ and $|M_1\cap C|=1$ for every $3$-edge-cut $C$ of $H$.

Define
\[
y(e)=
\begin{cases}
	\frac{1}{5},&e\in M_1,\\
	\frac{2}{5},&e\notin M_1.
\end{cases}
\]
Since $M_1$ is a perfect matching of $H$, each $v\in V(H)$ is incident with exactly one edge of $M_1$, and hence $y(\partial(v))=1$.
For an odd cut $C$ of $H$, if $|C|=3$, then $|M_1\cap C|=1$, and hence $y(C)=\frac{1}{5}+2\cdot\frac{2}{5}=1$; if $|C|\geq5$, then $y(C)\geq\frac{1}{5}|C|\geq1$. Therefore, $y$ is a fractional perfect matching of $H$.

Define
\[
c_1(e)=
\begin{cases}
	w(e),&e\in M_1,\\
	0,&e\notin M_1.
\end{cases}
\]
By Lemma~\ref{lem:selection}, there exists a perfect matching $M_2$ of $H$ such that
\[
\begin{aligned}
w(M_1\cap M_2)
=c_1(M_2)
&\leq \sum_{e\in E(H)}c_1(e)y(e)
&=\sum_{e\in M_1}w(e)y(e)
=\frac{1}{5}w(M_1)
\leq\frac{1}{15}w(E(H)).
\end{aligned}
\]
For every $3$-edge-cut $C$, since $y(C)=1$, we have $|M_2\cap C|=1$. Moreover, if $C$ is a $5$-edge-cut with $C\subseteq M_1$, then $y(C)=1$, and hence $|M_2\cap C|=1$.

Define
\[
z(e)=
\begin{cases}
	\frac{1}{7},&e\in M_1\cap M_2,\\
	\frac{2}{7},&e\in M_1\triangle M_2,\\
	\frac{3}{7},&e\notin M_1\cup M_2.
\end{cases}
\]
Since $M_1$ and $M_2$ are perfect matchings of $H$, for each $v\in V(H)$, the edges of $M_1$ and $M_2$ incident with $v$ are either the same or distinct. If they are the same, then $z(\partial(v))=\frac{1}{7}+\frac{3}{7}+\frac{3}{7}=1$; if they are distinct, then $z(\partial(v))=\frac{2}{7}+\frac{2}{7}+\frac{3}{7}=1$. Thus $z(\partial(v))=1$ for every $v\in V(H)$.
For an odd cut $C$ of $H$,
\[
z(C)=\sum_{e\in C}\left(\frac{3}{7}-\frac{1}{7}\chi^{M_1}(e)-\frac{1}{7}\chi^{M_2}(e)\right)
=\frac{3|C|-|M_1\cap C|-|M_2\cap C|}{7}.
\]
If $|C|=3$, then $|M_1\cap C|=|M_2\cap C|=1$, and hence $z(C)=1$. Suppose that $|C|=5$, and let $p_1=|M_1\cap C|$ and $p_2=|M_2\cap C|$. Then $z(C)=\frac{15-p_1-p_2}{7}$. Since $C$ is an odd cut, both $p_1$ and $p_2$ are odd, and hence belong to $\{1,3,5\}$. If $p_1\leq3$, then $p_1+p_2\leq3+5=8$. If $p_1=5$, then $C\subseteq M_1$, so $y(C)=1$, and Lemma~\ref{lem:selection} gives $p_2=1$. Thus $p_1+p_2\leq8$, and hence $z(C)\geq1$. If $|C|\geq7$, then $z(C)\geq\frac{1}{7}|C|\geq1$. Therefore, $z$ is a fractional perfect matching.

Define
\[
c_2(e)=
\begin{cases}
	w(e),&e\in M_1\cap M_2,\\
	0,&e\notin M_1\cap M_2.
\end{cases}
\]
By Lemma~\ref{lem:selection}, there exists a perfect matching $M_3$ of $H$ such that
\[
\begin{aligned}
w(M_1\cap M_2\cap M_3)
&=c_2(M_3)
\leq\sum_{e\in E(H)}c_2(e)z(e)\\
&=\sum_{e\in M_1\cap M_2}w(e)z(e)
=\frac{1}{7}w(M_1\cap M_2)
\leq\frac{1}{105}w(E(H)).
\end{aligned}
\]
This completes the proof.
\end{proof}

In the rest of this section, let $G$ be a graph of type \textup{(iii)} with core $H$. Then  $H$ is a bridgeless cubic graph. Let $w:E(H)\to\mathbb R$ be a nonnegative edge-weight function. By Lemma~\ref{lem:three-matchings}, $H$ has three perfect matchings $M_1,M_2,M_3$  satisfying $w(M_1\cap M_2)\leq\frac{1}{15}w(E(H)) \text{ and } w(M_1\cap M_2\cap M_3)\leq\frac{1}{105}w(E(H))$. Let $B=M_1\cap M_2$, and define spanning subgraphs $J_1,J_2,J_3,J_4$ of $H$ by $E(J_1)=M_1$, $E(J_2)=M_2$, $E(J_3)=M_3$, and $E(J_4)=E(H)\setminus(M_1\triangle M_2)$. Then $B$ is a matching of $H$, $w(B)\leq\frac{1}{15}w(E(H))$ and $B\subseteq E(J_1)\cap E(J_2)\cap E(J_4)$.

\begin{lemma}\label{lem:core-construction}
The subgraphs $J_1,J_2,J_3$, and $J_4$ are parity subgraphs of $H$, and together with $B$ satisfy the following properties: 
\vspace{-6pt}
\begin{enumerate}[(i)]
	\setlength{\topsep}{0pt}
	\setlength{\partopsep}{0pt}
	\setlength{\itemsep}{-1pt}
	\setlength{\parsep}{0pt}
\item $w\bigl(E(J_1)\cap E(J_2)\cap E(J_3)\cap E(J_4)\bigr)\leq\frac{1}{105}w(E(H))$;
\item every edge of $B$ is contained in at least three of $J_1,J_2,J_3,J_4$;
\item every edge in $E(H)\setminus B$ is contained in one or two of $J_1,J_2,J_3,J_4$.
\end{enumerate}
\end{lemma}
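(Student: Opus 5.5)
First I will show that each $J_i$ is a parity subgraph of $H$. Since $H$ is cubic, a spanning subgraph is a parity subgraph exactly when every vertex has degree $1$ or $3$ in it. For $J_1,J_2,J_3$ this holds because each is a perfect matching, so every vertex has degree $1$. For $J_4$, fix $v\in V(H)$ and let $e_1,e_2$ be the edges of $M_1$ and $M_2$ at $v$. If $e_1=e_2$, then no edge at $v$ lies in $M_1\triangle M_2$, so $d_{J_4}(v)=3$. If $e_1\neq e_2$, then both lie in $M_1\triangle M_2$ and the third edge does not, so $d_{J_4}(v)=1$. Equivalently, $E(J_4)$ is the complement of the even subgraph $M_1\triangle M_2$.

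Next I will classify each edge $e$ by its membership in $M_1$ and $M_2$, and count how many of $J_1,J_2,J_4$ contain it.
\begin{itemize}
\item If $e\in M_1\cap M_2=B$, then $e$ lies in $J_1$, $J_2$ and $J_4$.
\item If $e\in M_1\triangle M_2$, then $e$ lies in exactly one of $J_1,J_2$ and not in $J_4$.
\item If $e\notin M_1\cup M_2$, then $e$ lies only in $J_4$ among these three.
\end{itemize}

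Adding membership in $J_3$ gives the following counts:
\begin{itemize}
\item every edge of $B$ is in $3$ or $4$ of the subgraphs, which proves (ii);
\item every edge of $E(H)\setminus B$ is in exactly $1$, plus possibly $J_3$, so in $1$ or $2$ of them, which proves (iii).
\end{itemize}

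For (i), the computation above shows that $E(J_1)\cap E(J_2)\cap E(J_4)=B$. Hence the fourfold intersection is $M_1\cap M_2\cap M_3$, and Lemma~\ref{lem:three-matchings} gives the bound $\frac{1}{105}w(E(H))$. The preceding paragraph already records $w(B)\le\frac1{15}w(E(H))$ and that $B$ is a matching.

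There is no real obstacle. The only point needing care is the degree check for $J_4$, and in particular verifying that the case $e_1\neq e_2$ leaves exactly one edge at $v$ outside $M_1\triangle M_2$.
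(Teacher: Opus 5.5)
Your proof is correct and follows the paper's argument essentially step by step. It establishes the parity of $J_4$ as the complement of the even subgraph $M_1\triangle M_2$, proves (ii) and (iii) by classifying edges according to membership in $M_1$ and $M_2$, and proves (i) via the identity $E(J_1)\cap E(J_2)\cap E(J_3)\cap E(J_4)=M_1\cap M_2\cap M_3$ together with Lemma~\ref{lem:three-matchings}.
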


\begin{proof}
For $i\in\{1,2,3\}$, since $E(J_i)=M_i$,  $J_i$ is a parity subgraph of $H$. In the spanning subgraph of $H$ with edge set $M_1\triangle M_2$, every vertex has degree zero or two. Since $H$ is cubic, every vertex of $J_4$ has degree one or three and hence $J_4$ is also a parity subgraph of $H$.

Since $E(J_1)\cap E(J_2)\cap E(J_3)\cap E(J_4)=M_1\cap M_2\cap M_3$, by Lemma~\ref{lem:three-matchings}, \textup{(i)} follows. Since every edge $e\in B$ is contained in $J_1$, $J_2$, and $J_4$, \textup{(ii)} follows.

Now, let $e\in E(H)\setminus B$. Then $e\notin M_1\cap M_2$. If $e\in M_1\cup M_2$, then $e$ is contained in exactly one of $J_1$ and $J_2$ and is not contained in $J_4$. If $e\notin M_1\cup M_2$, then $e$ is contained in $J_4$ and neither in $J_1$ nor in $J_2$. Thus $e$ is contained in exactly one of $J_1,J_2,J_4$ and possibly  in $J_3$. Hence $e$ is contained in one or two of $J_1,J_2,J_3,J_4$.  \textup{(iii)} follows.
\end{proof}

In the following, when $J_1,J_2,J_3$ and $J_4$ are extended to perfect matchings of $G$, the edges in each channel and in each triangle replacing a vertex of $H$ are chosen as in the proof of Theorem~\ref{thm:five-cover}. Recall that  $m=|E(H)|$, $l=\sum_{e\in E(H)}\ell(e)$, and $r=\sum_{e\in E(H)}r(e)$.

\begin{lemma}\label{lem:local-loss}
There exist four perfect matchings of $G$ such that at most $\frac{16}{105}\left(m+l+\frac{r}{2}\right)$ edges of $G$ are uncovered.
\end{lemma}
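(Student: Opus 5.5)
The plan is to apply the construction set up before Lemma~\ref{lem:core-construction} with a carefully chosen weight function, extend $J_1,\ldots,J_4$ to perfect matchings of $G$, and then count the uncovered edges channel by channel and triangle by triangle. Take
\[
w(e)=1+\ell(e)+\tfrac{1}{2}r(e)\qquad (e\in E(H)),
\]
so that $w(E(H))=m+l+\frac{r}{2}$. Let $M_1',\ldots,M_4'$ be the perfect matchings of $G$ extending $J_1,\ldots,J_4$ (Lemma~\ref{lem:lifting}). The aim is to show that the number of uncovered edges is at most $2w(B)+2w(M_1\cap M_2\cap M_3)$. By Lemma~\ref{lem:three-matchings} this is at most $\left(\frac{2}{15}+\frac{2}{105}\right)w(E(H))=\frac{16}{105}\left(m+l+\frac r2\right)$.

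\textbf{Channels.} I will use the freedom in the extension: whenever two or more of the $J_i$ omit $e$, I make two of the corresponding matchings use complementary pairs of opposite side edges in each diamond and different edges in each 2-cycle of the channel of $e$. By Lemma~\ref{lem:core-construction}(iii), an edge $e\notin B$ lies in one or two of the $J_i$. Hence at least one matching selects $e$, covering the linking and central edges, and at least two omit it, covering the side and 2-cycle edges. Thus the whole channel is covered, exactly as in the proof of Theorem~\ref{thm:five-cover}.

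By Lemma~\ref{lem:core-construction}(ii), an edge $e\in B$ lies in at least three of the $J_i$.
\begin{itemize}
\item If $e\in B\setminus M_3$, it lies in exactly three. Only one matching omits $e$, so at most $2\ell(e)+r(e)$ edges of the channel are uncovered.
\item If $e\in M_1\cap M_2\cap M_3$, no matching omits $e$. Then all $4\ell(e)+2r(e)$ side and 2-cycle edges are uncovered.
\end{itemize}

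\textbf{Triangles.} Fix $v\in V(H)$. The triangle edge opposite the vertex attached to the channel of $f\ni v$ is covered iff some $J_i$ has $d_{J_i}(v)=1$ with its unique edge at $v$ equal to $f$. Each of $J_1,J_2,J_3$ has degree $1$ at $v$, contributing the edges $M_1(v),M_2(v),M_3(v)$.
\begin{itemize}
\item If $M_1(v)\neq M_2(v)$, then $J_4$ has degree $1$ at $v$ and contains exactly the third edge at $v$. All three triangle edges are covered.
\item If $M_1(v)=M_2(v)=b\in B$, then $d_{J_4}(v)=3$. The covered triangle edges are those corresponding to $b$ and $M_3(v)$. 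So one triangle edge is lost if $b\notin M_3$, and two if $b\in M_3$.
\end{itemize}
Charging these losses to $b$, each edge $b\in B$ is charged at most $2$ from its two ends, or $4$ if $b\in M_3$.

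\textbf{Summing.} An edge of $B\setminus M_3$ costs at most $2+2\ell(e)+r(e)=2w(e)$. An edge of $M_1\cap M_2\cap M_3$ costs at most $4+4\ell(e)+2r(e)=4w(e)$. All other edges cost nothing. The total is therefore at most $2w(B)+2w(M_1\cap M_2\cap M_3)$, which gives the bound.

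The step needing most care is the triangle accounting: correctly identifying which triangle edges each $J_i$ covers at vertices with $M_1(v)=M_2(v)$, and checking that the channel choices in the extension can be made complementary as required.
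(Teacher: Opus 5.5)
Your proposal is correct and follows the paper's proof essentially step for step. It uses the same weight $w(e)=1+\ell(e)+\frac{1}{2}r(e)$, the same channel and triangle case analysis, and the same final count $2w(B)+2w(M_1\cap M_2\cap M_3)\leq\frac{16}{105}\left(m+l+\frac{r}{2}\right)$. You also state explicitly that, in each channel, two matchings omitting $e$ must use complementary side-edge and 2-cycle choices; the paper leaves this implicit by referring back to the proof of Theorem~\ref{thm:five-cover}.
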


\begin{proof}
By Lemma~\ref{lem:lifting}, there exist perfect matchings $F_1,F_2,F_3,F_4$ of $G$ such that $J_i$ is extended to $F_i$ for each $i\in\{1,2,3,4\}$. 
Let $\mathbb{F}=\{F_1,F_2,F_3,F_4\}$. We first show that $\mathbb{F}$ satisfies the following properties:
\vspace{-6pt}
\begin{enumerate}[(i)]
	\setlength{\topsep}{0pt}
	\setlength{\partopsep}{0pt}
	\setlength{\itemsep}{-1pt}
	\setlength{\parsep}{0pt}
\item for every $e\in E(H)\setminus B$,  the channel of $e$ is covered by $\mathbb{F}$;
\item for every $v\in V(H)$ not incident with any edge of $B$, the triangle replacing $v$ is covered by $\mathbb{F}$;
\item for every $e=uv\in B$ contained in exactly three of $J_1,\ldots,J_4$, exactly $2\ell(e)+r(e)+2$ edges in the channel of $e$ and in the two triangles replacing $u$ and $v$ are uncovered by $\mathbb{F}$;
\item for every $e=uv\in B$ contained in each of $J_1,\ldots,J_4$, exactly $4\ell(e)+2r(e)+4$ edges in the channel of $e$ and in the two triangles replacing $u$ and $v$ are uncovered by $\mathbb{F}$.
\end{enumerate}

Let $e\in E(H)\setminus B$. By Lemma~\ref{lem:core-construction}\textup{(iii)}, $e$ is contained in one or two of $J_1,J_2,J_3,J_4$. Hence at least one member of $\mathbb{F}$ selects $e$, while at least two do not. Therefore  the channel of $e$ is covered by $\mathbb{F}$. (i) follows. 

Let $v\in V(H)$ be a vertex not incident with any edge of $B$. Since $B=M_1\cap M_2$, the edges of $M_1$ and $M_2$ incident with $v$ are distinct. The third edge incident with $v$ is contained in neither $M_1$ nor $M_2$, and hence is contained in $J_4$. Thus the unique edges of $J_1,J_2,J_4$ incident with $v$ are pairwise distinct. Consequently, $\{F_1,F_2,F_4\}$ covers the triangle replacing $v$. (ii) follows. 

Now let $e=uv\in B$. Then $e$ is contained in $J_1$ and $J_2$, and $e$ is not contained in $M_1\triangle M_2$. Since $M_1$ and $M_2$ are perfect matchings, the other two edges incident with $u$ (or with $v$) are contained in neither $M_1$ nor $M_2$, and hence are contained in $E(J_4)$.
 
Suppose that $e$ is contained in exactly three of $J_1,J_2,J_3,J_4$. Since $e$ is contained in $J_1,J_2,J_4$, it is not contained in $J_3$. Hence, for any  end of $e$, $J_3$ contains one edge incident with it other than $e$.
In each of the triangles replacing $u$ and $v$, $F_1$ and $F_2$ contain the same edge of it, $F_3$ contains one of the other two edges of it, and $F_4$ contains no edge of it. Hence exactly one edge of each triangle is uncovered by  $\mathbb{F}$. Since $F_3$ is the only perfect matching not selecting $e$, exactly two side edges of each diamond and one edge of each 2-cycle in the channel of $e$ are uncovered. Thus the total number of edges uncovered by  $\mathbb{F}$ is $2\ell(e)+r(e)+2$. \textup{(iii)}  follows. 

Suppose that $e$ is contained in all of $J_1,J_2,J_3,J_4$. Then the three edges incident with $u$ (or with $v$) belong to $E(J_4)$. In each triangle replacing $u$ and $v$, $F_1,F_2,F_3$ contain a common edge of the triangle,
 while $F_4$ contains no edge of the triangle. Hence exactly two edges of the triangle are uncovered by $\mathbb{F}$. Since all four perfect matchings select $e$, all four side edges of each diamond and both edges of each 2-cycle in the channel of $e$ are uncovered. Thus, the  number of edges uncovered by $\mathbb{F}$ is $4\ell(e)+2r(e)+4$.  \textup{(iv)} follows.
 
Let $w:E(H)\to\mathbb R$ be defined by $w(e)=\ell(e)+\frac{r(e)}{2}+1$ for every $e\in E(H)$. Since $\ell(e),r(e)\geq0$, the function $w$ is nonnegative and $w(E(H))=m+l+\frac{r}{2}$.
We now complete the proof by applying the four properties above to the family $\mathbb{F}$ of perfect matchings constructed from $J_1,J_2,J_3,J_4$, where $J_1,J_2,J_3,J_4$ are obtained with respect to this function $w$.

Let $B'=E(J_1)\cap E(J_2)\cap E(J_3)\cap E(J_4)$. Then $B'\subseteq B$, and $w(B)\leq \frac{1}{15}w(E(H)) =\frac{1}{15}\left(m+l+\frac{r}{2}\right)$.
By Lemma~\ref{lem:core-construction}\textup{(i)}, we have  $w(B')\leq\frac{1}{105}\left(m+l+\frac{r}{2}\right)$. By Lemma~\ref{lem:core-construction}\textup{(ii)}, every edge in $B\setminus B'$ is contained in exactly three of $J_1,J_2,J_3,J_4$.

By property \textup{(i)} and \textup{(ii)} of $\mathbb{F}$, every edge not covered by $\mathbb{F}$ lies either in the channel of an edge of $B$ or in a triangle replacing an end of an edge of $B$. Note that the channels of distinct edges of $H$ are edge-disjoint. Since $B$ is a matching, the triangles replacing the ends of distinct edges of $B$ are distinct. 
By properties \textup{(iii)} and \textup{(iv)} of $\mathbb{F}$, each edge $e\in B\setminus B'$ contributes $2w(e)$ uncovered edges, while each edge $e\in B'$ contributes $4w(e)$ uncovered edges. Thus the total number of edges  not covered by $\mathbb{F}$ is
\[
\begin{aligned}
2w(B\setminus B')+4w(B')
&=2w(B)+2w(B')\\
&\leq 2\cdot\frac{1}{15}\left(m+l+\frac{r}{2}\right)
+2\cdot\frac{1}{105}\left(m+l+\frac{r}{2}\right)\\
&=\frac{16}{105}\left(m+l+\frac{r}{2}\right).
\end{aligned}
\]
This completes the proof.
\end{proof}

\begin{proof}[Proof of Theorem~\ref{thm:intro-main}]
The graphs $K_4$, $K_2^3$, and every D2C-ring can be covered by three perfect matchings and hence satisfy the theorem. For a graph $G$ of type \textup{(iii)}, by Lemma~\ref{lem:local-loss}, there exist four perfect matchings of $G$ such that at most $\frac{16}{105}\left(m+l+\frac{r}{2}\right)$ edges are uncovered by them.
By~\eqref{eq:size-G}, $|E(G)|=3m+6l+3r$. Since $l,r\geq0$, we have
\[
\frac{\frac{16}{105}\left(m+l+\frac{r}{2}\right)}{|E(G)|}
=\frac{16\left(m+l+\frac{r}{2}\right)}{315(m+2l+r)}
\leq\frac{16}{315}.
\]
Thus at least $\frac{299}{315}|E(G)|$ edges are covered by these four perfect matchings.
\end{proof}

\section*{Acknowledgements}
This work is supported by the National Natural Science Foundation of China (Nos. 12571381
and 12371361) and Natural Science Foundation of Henan Province (Nos. 262300422603 and 262300421874).

\end{document}